\definecolor{dkgreen}{rgb}{0,0.6,0}
\definecolor{gray}{rgb}{0.5,0.5,0.5}
\definecolor{mauve}{rgb}{0.58,0,0.82}
\tiny\color{gray},
\newtheorem{theorem}{Theorem}[section]
\newtheorem{proposition}[theorem]{Proposition}
\newtheorem{corollary}[theorem]{Corollary}
\theoremstyle{definition}
\newtheorem{definition}[theorem]{Definition}
\theoremstyle{remark}
\newtheorem{remark}[theorem]{Remark}
\numberwithin{equation}{section}
\newcommand{\Image}{\mathrm{Image}\,}
\newcommand{\Pic}{\mathrm{Pic}\,}
\newcommand{\PGL}{\mathrm{PGL}\,}
\newcommand{\length}{{\rm length }}
\newcommand{\PP}{{\mathbb P }}
\newcommand{\GG}{{\mathbb G }}
\newcommand{\sO}{{\mathcal O }}
\newcommand{\sI}{{\mathcal I }}
\newcommand{\sU}{{\mathcal U }}
\newcommand{\sM}{{\mathcal M }}
\newcommand{\sW}{{\mathcal W }}
\newcommand{\sH}{{\mathcal H }}
\newcommand{\Hilb}{{\rm Hilb }}
\begin{document}

\title{Unirationality of the Hurwitz space $\mathcal{H}_{9,8}$}

%    Information for first author
\author{Hamid Damadi}
%    Address of record for the research reported here
\address{Department of Pure Mathematics
Faculty of Mathematics and Computer Science
Amirkabir University of Technology (Tehran Polytechnic)
424, Hafez Ave., Tehran 15914, Iran.}
%    Current address
\email{Hamid.Damadi@aut.ac.ir}
%    \thanks will become a 1st page footnote.
%\thanks{The first author was supported partially by  }%NSF Grant \#000000.}

%    Information for second author
\author{Frank-Olaf Schreyer}
\address{Mathematik und Informatik, Universit\"at des Saarlandes, Campus E2 4, D-66123 Saarbr\"ucken, Germany}
\email{schreyer@math.uni-sb.de}
%\thanks{Support information for the second author.}

%    General info
\subjclass[2010]{14H10, 14H50, 14H51, 14M20, 14Q05}

\date{\today}

\dedicatory{}

\keywords{Hurwitz space, Unirationality}

\fontsize{10pt}{14pt}
\begin{abstract}
In this paper we prove that the Hurwitz space $\mathcal{H}_{9,8}$, which parameterizes  8-sheeted covers of $\PP^1$ by  curves of genus 9, is unirational. Our construction leads to an explicit 
Macaulay2 code, which will randomly produce  a nodal curve of degree 8 of geometric genus 9 with 12 double points and together with a pencil of degree 8.
\end{abstract}
\maketitle

%% The correct journal style for \specialsection is all uppercase; a known bug
%% in amsart.cls prevents this, so input must be uppercase until it is fixed.
%\specialsection*{This is a Special Section Head}
\section{Introduction}
A Hurwitz space parameterizes "maps of curves to $\mathbb{P}^1$":
$$
\mathcal{H}_{g,d}=
\left\{ f \colon C \to \PP^1 \mid f  \hbox{ a simply branched cover of degree } d \hbox{ and } C \hbox{ of genus } g \right\}
$$
where simply branched  means that for every ramification point $p \in C$ the ramification index
$$e_p = \length(\Omega_{C/\PP^1})_ p + 1 = 2,$$
and no two ramification points lie over the same point of
$\mathbb{P}^1$. In particular, $f$ is ramified in $w=2g-2+2d$ distinct points by the Riemann-Hurwitz formula.\\
Clebsch in \cite{Cle} showed that $\mathcal{H}_{g,d}$ is a
smooth and connected hence irreducible quasi-projective variety. Recall the following diagram from \cite{ACGH}
$$
\xymatrix{
\mathcal{H}_{g,d}\ar[d] \ar[r]^{\pi}  & \mathcal{M}_g  \\
\mathrm{Sym}^w (\mathbb{P}^1)\backslash \Delta=\mathbb{P}^w\backslash \Delta&
}
$$
where $\Delta$ denotes the closed subscheme of the points in $\mathrm{Sym}^w (\mathbb{P}^1)$ with at least two identical summands.\\
The map $\pi$ is a natural forgetful map which is dominant if and only if $d\geq \frac{g+2}{2}$ \cite{ACGH}.
The downward map is described by mapping a branched
cover to its branch divisor which consists of an unordered $w$-tuple of distinct points. Given such a $w$-tuple,
recovering the curve amounts to adding monodromy data, i.e., information how the sheets of the covering
glue as we go around the branch points. It follows that $\mathcal{H} _{g,d}\longrightarrow \mathbb{P}^w \backslash \Delta$ is a finite covering, hence
$$
\dim \mathcal{H}_{g,d} = \dim (\mathbb{P}^w \backslash \Delta)  = w=2g-2+2d.
$$

The book \cite{HM} and the paper \cite{F} are excellent references for the study of Hurwitz 
spaces. \medskip
\begin{definition}
A variety $V$ is unirational if there are some projective spaces $\mathbb{P}^n$ and a dominant rational map
$$
\mathbb{P}^n\dashrightarrow V.
$$
\end{definition}
The unirationality of a moduli
space leads to a parametrization of a dominant family of the objects in terms of independent  parameters.\\
The unirationality of $\mathcal{H}_{g,d}$ for $2 \leq d \leq 5$ and arbitrary $g\geq 2$ is known for a long time. The case $d = 5$ is based on the work of Petri in \cite{P}. In \cite{AC} Arbarello and Cornalba proved that in the following cases the space $\mathcal{H}_{g,d}$ is unirational:
$$
\left\{
\begin{array}{cclcc}
d\leq 5 & and & g \geq d+1 &&\\
d=6 & and & 5\leq g \leq 10 & or & g=12\\
d=7 & and & g=7 &&
\end{array}
\right.
$$
Geiss in \cite{Ge} and \cite{Ge12} showed that $\mathcal{H}_{g,6}$  for $g \leq 28$ and $g = 30, 31, 35, 36, 40, 45$ and $\mathcal{H}_{g,7}$ for $6\leq g\leq 12$ are unirational.
%In \cite{HM82} Mumford and Harris constructed the space of admissible covers $\overline{\mathcal{H}}_{g,d}$ which is a modular compactification of the Hurwitz space and there is a map  $$\overline{\pi} : \overline{\mathcal{H}}_{g,d} \longrightarrow \overline{\mathcal{M}}_g  $$extending the map $\pi$ to the moduli space $\overline{\mathcal{M}}_g$ of stable curves of genus $g$.

In a sequence of papers Mumford and Harris \cite{HM82}, Harris \cite{H84}, Eisenbud and Harris \cite{EH} and Farkas \cite{Far} showed that the moduli spaces ${\mathcal{M
}}_g$ of curves of genus $g$ are of general type for $g = 22$ or $g \geq 24$, and of positive Kodaira dimension for $g=23$.
This implies that
$$
{\mathcal{H}}_{g,d} \hbox{ is not unirational for } g \geq 22 \hbox{ and } d \geq \lfloor\frac{(g + 2)}{2}\rfloor,
 $$
 since $\sH_{g,d}$ dominates $\sM_g$ for $d \geq \lfloor\frac{(g + 2)}{2}\rfloor$.

For small genus
%In the papers \cite{M1} and \cite{M2},
Mukai proved the following theorem:
\begin{theorem}[Mukai,\cite{M1},\cite{M2}]
A general canonical curve $C$ of genus $g = 7, 8, 9$ arises as
transversal intersection of a linear space with a homogeneous
variety:
$$
\left\{
\begin{array}{cll}
g=7 & C = \mathbb{P}^6 \cap \mathrm{Spinor} 10 \subset \mathbb{P}^{15}& \mbox{Isotropic subspaces of}\ \ Q^8 \subset \mathbb{P}^9\\
g=8 & C = \mathbb{P}^7 \cap \mathbb{G}(2, 6)^8 \subset \mathbb{P}^{14}& \mbox{Grassmannian of lines in}\ \mathbb{P}^5\\
g=9& C = \mathbb{P}^8 \cap \mathbb{L}(3, 6)^6 \subset \mathbb{P}^{13}& \mbox{Lagrangian subspaces of} \ (\mathbb{C}^6 , \omega)
\end{array}
\right.
$$
\end{theorem}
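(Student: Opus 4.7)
The plan is to treat the three genera $g \in \{7,8,9\}$ in parallel: for a general canonical curve $C \subset \PP^{g-1}$ I would construct a special vector bundle $E$ on $C$ whose evaluation morphism realises an embedding into the stated homogeneous variety and whose top wedge pulls back $\sO(1)$ from the Plücker (resp.\ spinor) embedding to $K_C$. Once this is in place, the image automatically lies in a linear section of the correct dimension, and one only needs to show that this section is transverse.

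First I would identify the correct bundle in each case via a Brill--Noether ansatz. For $g=9$ one looks for a rank $3$ stable bundle $E$ with $\det E \cong K_C$, $h^0(E) = 6$, and a non-degenerate skew form $E \otimes E \to K_C$; for $g=8$ a rank $2$ bundle with $h^0(E) = 6$; for $g=7$ a rank $5$ bundle whose sections produce the half-spinor embedding. In each case evaluation yields $\varphi_E \colon C \to G$, where $G$ is respectively $\mathbb{L}(3,6)$, $\GG(2,6)$, or the ten-dimensional spinor variety, and a direct Chern class computation identifies $\varphi_E^{\ast}\sO(1)$ with $K_C$, forcing $\varphi_E(C) \subset G \cap \PP^{g-1}$.

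The remaining step is to check that this linear section is transverse and equal to $\varphi_E(C)$ as schemes. Here I would compare Hilbert polynomials: intersecting $G$ with a linear subspace of the expected codimension produces a one-dimensional scheme whose Hilbert polynomial matches that of a canonical curve of genus $g$. Combining projective normality of the canonical embedding with the classical description of the homogeneous ideal of $G$ by quadrics (Plücker, resp.\ spinor relations) that vanish on $C$ by construction, one deduces the set-theoretic, and then scheme-theoretic, equality $\varphi_E(C) = G \cap \PP^{g-1}$; transversality then follows for generic choice of $\PP^{g-1}$ by a tangent space computation.

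The substantive obstacle is the existence and genericity of $E$ on a \emph{general} curve: the relevant Brill--Noether loci have non-positive expected dimension, so one cannot invoke standard existence results directly. The natural route is a degeneration through K3 surfaces: on a polarised K3 surface $S_g$ of the appropriate Mukai genus one produces $E$ as a simple rigid sheaf, restricts it to a smooth hyperplane section $C \subset S_g$, and transfers the existence to a general curve by semicontinuity, using that the general curve of these small genera is known to lie on a K3 surface. This last point is where the argument is genuinely genus-specific and fails beyond $g=9$.
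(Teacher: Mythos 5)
This statement is not proved in the paper at all: it is quoted as Mukai's theorem, with the proof residing entirely in the cited references \cite{M1} and \cite{M2}, and it enters the paper only as a black box used to conclude that $\sM_{g,g}$, $\Pic^d_g$ and hence $\sH_{g,d}$ for $d\ge g$ are unirational when $g\le 9$. So there is no in-paper argument to compare yours against; what you have written is an outline of Mukai's own strategy, and as such it is broadly faithful — the rank $2$ bundle with $\det E\cong K_C$ and $h^0(E)=6$ for $g=8$, the rank $3$ symplectic bundle for $g=9$, the spinor construction for $g=7$, and the identification $\varphi_E^{*}\sO(1)\cong K_C$ are indeed the backbone of his proofs.

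Two points in your sketch would need repair before it could be called a proof. First, the transversality step is stated backwards: the linear space $\PP^{g-1}$ is not a free parameter that you may choose generically, it is the linear span of $\varphi_E(C)$ inside the Pl\"ucker (resp.\ spinor) space and is therefore determined by $C$ and $E$. You must show that for a \emph{general curve} this particular span meets $G$ transversally, which Mukai does via cohomological rigidity of $E$ (vanishing of the relevant $\mathrm{Ext}$/normal bundle cohomology), not by moving the linear space. Second, the existence, stability and uniqueness of $E$ on the general curve is the real content of the theorem, and your appeal to K3 degenerations hides a potential circularity: the statement that a general curve of genus $7,8,9$ lies on a K3 surface is itself established through Mukai's linear-section models of the K3 surfaces in the same homogeneous varieties. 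One must either prove the K3 statements independently first (as Mukai does) or construct $E$ directly on the curve from Brill--Noether general linear series; your sketch does neither explicitly. Finally, note that Mukai's theorems come with precise genericity hypotheses (e.g.\ $C$ not tetragonal for $g=7$, no $g^2_7$ for $g=8$, no $g^1_5$ for $g=9$), which a complete argument has to identify rather than leave as ``general''.
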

So the moduli spaces $\mathcal{M}_{g,g}$ of $g$-pointed curves of genus $g$ and the universal Picard varieties $\Pic^d_g \longrightarrow\mathcal{M}_g$ are unirational for $g \leq 9$. Then as a corollary the Hurwitz spaces $\mathcal{H}_{g,d}$ are unirational for $g\leq 9$ and $d\geq g$. This means in the range $d \leq 5$ or $g\leq 9$ only the case $\mathcal{H}_{9,8}$ is open.\\

The idea of this paper is to deduce the uniratioality of the Hurwitz space $\mathcal{H}_{9,8}$ from that of the Brill-Noether locus $\sW^1_{9,8}$, whose definition we recall in Section 2 and whose unirationality we prove in Section 3. \\

Part of the proof is based on explicit computations in Macaulay2.
All necessary computations can be found in 
\href{http://www.math.uni-sb.de/ag/schreyer/index.php/computeralgebra}{\color{magenta} RandomCurveOfGenus9\-WithAPencilOfDegree8}
a $\mathtt{Macaulay2}$ package \cite{DS} available online. 
We choose a finite prime field $\mathbb F_p$ in our code $p=10007$. We then construct an example along steps (1) to (4) of Section 3 below
 and verify that the example
has all desired open properties  specified in Section 3 over $\mathbb F_p$. Since we may regard all computations over 
the field $\mathbb{F}_p={\mathbb{Z}}/{(p)}$ as the reduction mod $p$ of an example with integer coefficients, semi-continuity proves that
the corresponding example over $\mathbb Q$ has the same desired properties. Finally, from the existence of an example over $\mathbb Q$ for which the  desired properties hold, we will deduce  in Section 3 the unirationality  of $\mathcal{H}_{9,8}$ over $\mathbb Q$ and hence over $\mathbb C$.

%%%%%%%%%%%%%%%%%%%%%%%%%%%%%%%%%%%%%%%%%%%%%%%%%%%%%%%%%%%%%%%%%%%%%%%%
%
%%%%%%%%%%%%%%%%%%%%%%%%%%%%%%%%%%%%%%%%%%%%%%%%%%%%%%%%%%%%%%%%%%%%%%%%
\section{Preliminaries and Notations}
The notations and definitions are from two books \cite{ACGH} and \cite{ACG}. For 
reader's convenience we recall some of them. Let $C$ be a smooth curve of genus g and $\Pic^d(C)$ be the Picard variety consisting of isomorphism classes of line bundles of degree $d$ on $C$. %Let $C^r_d$ be the subvariety of the $d$-fold symmetric product $C_d$, parameterizing effective divisors of degree $d$ on $C$ moving in a linear series of dimension at least $r$: $$ C^r_d=\{D\in C_d|\ \ \dim |D|\geq r\}. $$
Let $L$ be a point of the Brill-Noether locus
$$
W^r_d(C)=\{L\in \Pic^d(C)|\ \ h^0(C,L)\geq r+1\}
$$
not belonging to $W^{r+1}_d(C)$. The tangent space to $W^r_d(C)$ at $L$ is
$$
T_L(W^r_d(C)) = (\Image \mu_L)^{\perp},
$$
where
$$
\mu_L:H^0(C,L)\otimes H^0(C,\omega_C\otimes L^{-1})\longrightarrow H^0(C,\omega_C)
$$
is the Petri map. The scheme $W^r_d(C)$ is smooth at $L$ of dimension equal to the Brill-Noether number
 $$
 \rho=\rho(g,d,r)=g-(r+1)(g-d+r)\geq 0 \leqno{(*)}
$$
if and only if $\mu_L$ is injective. This is the case for a general curve $C$ and every $L \in W^r_d(C) \setminus W^{r+1}_d(C)$ by the Gieseker-Petri Theorem.

 The diagram from the introduction refines to
$$
\xymatrix{
\mathcal{H}_{g,d}\ar[d] \ar[r]^{\alpha}  & {\mathcal G}^1_{g,d} \ar[r]^\beta& {\mathcal W}^1_{g,d}\ar[r]^\gamma& \mathcal{M}_g  \\
\mathrm{Sym}^w (\mathbb{P}^1)\backslash \Delta& &
}
$$
where 
 $$
 {\mathcal W}^1_{g,d}=\{ (C,L) \mid C \in {\mathcal M}_g, L \in \Pic^d(C) \hbox{ with } h^0(C,L) \ge 2 \}
 $$
 is  the universal Brill-Noether locus and
$$
{\mathcal G}^1_{g,d}=\{ (C,L,V) \mid (C,L) \in {\mathcal W}^1_{g,d}, \, V \subset H^0(C,L) \hbox{ a 2-dimensional subspace} \}
$$
is the universal space of pencils of degree $d$.
The fiber of the map $\alpha$ over a  general point  $(C,L,V)$ is $\PGL(V)$, the fiber of $\beta$ over a general point $(C,L)$ is a
Grassmannian $\GG(2,H^0(C,L))$. Finally the map $\gamma$ is dominant, if and only if $d\geq \frac{g+2}{2}$, and in this case the fiber
of $\gamma\circ \beta $ over a general point $C \in {\mathcal M_g}$ is the space 
$G^1_d(C)$ of dimension $\rho=g-2(g-d+1)$ of pencils of degree $d$ on $C$. Hence
$$
\dim\mathcal{G}^1_{g,d} = w -3 = 2g + 2d - 5=3g-3+g-2(g-d+1)
$$
verifies Riemann's count for $\dim {\mathcal M_g}=3g-3$.

\begin{proposition}[\cite{ACG} Proposition 6.8]\label{prop}
If $g > 1$, $d \geq 2$, and $d \leq g + 1$, then the singular locus of $\mathcal{W}^1_{g,d}$ is $\mathcal{W}^2_{g,d}$, and
$$
\dim \mathcal{W}^1_{g,d} = 3g - 3 + \rho = 2g + 2d - 5 .
$$
\end{proposition}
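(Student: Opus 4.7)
The plan is to realize $\mathcal{W}^1_{g,d}$ as a degeneracy locus inside the universal Picard variety $\Pic^d_g$ (of dimension $4g-3$), and then combine a Macaulay-type codimension bound with a Zariski tangent space calculation via the Petri map. Choose a sufficiently positive relative effective divisor $D$ on the universal curve $\pi\colon \mathcal{C}\to \Pic^d_g$ so that $R^1\pi_*(\mathcal{L}(D))=0$ for the Poincar\'e bundle $\mathcal{L}$, and push the short exact sequence
$$0\to \mathcal{L}\to \mathcal{L}(D)\to \mathcal{L}(D)|_D\to 0$$
forward to obtain a morphism $\phi\colon E\to F$ of vector bundles whose fibrewise kernel and cokernel at $(C,L)$ are $H^0(L)$ and $H^1(L)$, with $\mathrm{rk}\,F-\mathrm{rk}\,E=g-d-1$. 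Then $\mathcal{W}^1_{g,d}$ is the rank locus $\{\mathrm{rk}\,\phi\le \mathrm{rk}\,E-2\}$, of expected codimension $2(g-d+1)$ in $\Pic^d_g$. The classical Macaulay bound gives
$$\dim \mathcal{W}^1_{g,d}\ge (4g-3)-2(g-d+1)=2g+2d-5.$$
The general fibre of $\gamma\colon \mathcal{W}^1_{g,d}\to \mathcal{M}_g$ over a Petri-general curve is $W^1_d(C)$, smooth of dimension $\rho$ by Brill-Noether / Gieseker-Petri; so the component of $\mathcal{W}^1_{g,d}$ dominating $\mathcal{M}_g$ has dimension exactly $3g-3+\rho=2g+2d-5$, which together with the determinantal structure forces $\mathcal{W}^1_{g,d}$ to be pure of this dimension and Cohen-Macaulay.

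Next, fix $(C,L)\in \mathcal{W}^1_{g,d}\setminus \mathcal{W}^2_{g,d}$, where $h^0(L)=2$ and $\mathrm{rk}\,\phi_{(C,L)}=\mathrm{rk}\,E-2$. The standard formula for the Zariski tangent space of a rank locus at a ``generic-rank'' point yields
$$T_{(C,L)}\mathcal{W}^1_{g,d}=\ker\bigl(\tau\colon T_{(C,L)}\Pic^d_g\to \mathrm{Hom}(H^0(L),H^1(L))\bigr),$$
with $\tau$ induced by the differential of $\phi$. A Kodaira-Spencer computation via the Atiyah extension $0\to \mathcal{O}_C\to \mathcal{A}(L)\to T_C\to 0$ identifies the transpose
$$\tau^*\colon H^0(L)\otimes H^0(K_C\otimes L^{-1})\to T^*_{(C,L)}\Pic^d_g$$
as a ``universal Petri map'', whose composition with the natural projection $T^*_{(C,L)}\Pic^d_g\twoheadrightarrow H^0(K_C)$ recovers the usual Petri map $\mu_L$; the additional contribution into the subspace $H^0(K_C^2)\subset T^*_{(C,L)}\Pic^d_g$ comes from the moduli deformations via $H^1(T_C)$. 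Injectivity of $\tau^*$ whenever $h^0(L)=2$ implies surjectivity of $\tau$, giving $\dim T_{(C,L)}\mathcal{W}^1_{g,d}=2g+2d-5$; so $(C,L)$ is a smooth point. Conversely, at $(C,L)\in \mathcal{W}^2_{g,d}$ the kernel of $\phi_{(C,L)}$ jumps to dimension $\ge 3$, placing $(C,L)$ in a deeper stratum of the degeneracy; a direct tangent-space computation there produces $\dim T_{(C,L)}>2g+2d-5$, so $(C,L)$ is singular. This identifies the singular locus with $\mathcal{W}^2_{g,d}$.

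The technical heart of the argument is the injectivity of $\tau^*$ (equivalently, the surjectivity of $\tau$) at every point of the open stratum $\mathcal{W}^1_{g,d}\setminus \mathcal{W}^2_{g,d}$, not only at Petri-generic ones. On Brill-Noether-special curves --- e.g.\ trigonal curves of genus $\ge 5$ carrying their unique $g^1_3$ --- the fibre-wise Petri map $\mu_L$ itself already fails injectivity by a naive dimension count, and the missing relations in $\tau^*$ must come from the $H^0(K_C^2)$-contribution induced by moduli deformations of $C$ through the Atiyah class. Making this precise is the main obstacle; it is the content of the universal Brill-Noether computation in ACG.
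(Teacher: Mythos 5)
The paper offers no proof of this proposition: it is quoted verbatim from \cite{ACG}, Proposition 6.8, so there is no in-paper argument to compare against. Your outline does follow the strategy of the source: describe $\mathcal{W}^1_{g,d}$ as a degeneracy locus of a morphism of bundles over the universal Picard variety, obtain the lower bound $2g+2d-5$ on every component from the Eagon--Northcott/Macaulay codimension estimate, and control the Zariski tangent space at a point $(C,L)$ with $h^0(L)=2$ through the universal Petri map $\tau^*$, whose component in $H^0(K_C)$ is $\mu_L$ and whose restriction to $\ker\mu_L$ lands in $H^0(K_C^{\otimes 2})$. All of the numerology you record (ranks, expected codimension, $3g-3+\rho=2g+2d-5$) is correct.

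The gap is the one you flag yourself, and it is not a technicality but the entire content of the proposition: one must prove that $\tau^*$ is injective at \emph{every} point of $\mathcal{W}^1_{g,d}\setminus\mathcal{W}^2_{g,d}$, in particular over Brill--Noether-special curves where $\mu_L$ already has a kernel. Without it, two of your intermediate claims do not stand. First, the dimension upper bound: your argument via the generic fibre of $\gamma$ only controls components dominating $\mathcal{M}_g$, and for $d<\frac{g+2}{2}$ (allowed by the hypotheses, e.g.\ $d=2$) no component dominates since $\rho<0$; the assertion that the determinantal structure then ``forces'' purity and Cohen--Macaulayness is not a valid inference, because Eagon--Northcott yields only the lower bound, and purity follows only once every component is known to have the expected codimension --- which is precisely what the tangent-space bound is needed for. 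Second, the identification of the singular locus with $\mathcal{W}^2_{g,d}$ requires smoothness at every point of the open stratum, i.e.\ the same injectivity. The missing argument runs roughly as follows: reduce to the base-point-free part of the pencil, use the base-point-free pencil trick to identify $\ker\mu_L\cong H^0(K_C\otimes L^{-2})$, and show that the induced map $\mu_1\colon\ker\mu_L\to H^0(K_C^{\otimes 2})$ is injective. Either supply this computation or, as the paper does, state the proposition explicitly as a quoted result of \cite{ACG}.
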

We will also need the Severi variety $\mathcal{U}_{g,d}$, which parameterizes reduced and irreducible plane curves of degree $d$ and geometric genus $g$ having only $\delta$ nodes as singularities (More details about Severi varieties can be found in \cite{H} and \cite{HM}). Consider the following diagram:
$$
\xymatrix{
\mathcal{U}_{g,d}\ar[d] \ar[r]&  \mathcal{M}_g  \\
\mathrm{Sym}^{\delta} (\mathbb{P}^2)\backslash \Delta&
}
$$
A single point imposes 3 conditions on the linear system of plane curves of degree $d$ to become a node. In case that general
$\delta$ points impose $3\delta$ linearly independent conditions to be nodes, we have that $\mathcal{U}_{g,d} \to \mathrm{Sym}^{\delta} (\mathbb{P}^2)$ is dominant and
$$
\dim \ \mathcal{U}_{g,d}=\frac{d(d+3)}{2}-3\delta+2\delta = 3d+g-1.
$$
%Finally recall that a set $S=\{s_1,\ldots,s_d\}$ of distinct points in $\mathbb{P}^r$ is said impose independent conditions on hypersurfaces of degree $n$ if
%$$ h^0(\mathbb{P}^r,\mathcal{I}_S(n))=h^0(\mathbb{P}^r,\mathcal{O}_{\mathbb{P}^r}(n))-d $$
%where $\mathcal{I}_S\subset \mathcal{O}_{\mathbb{P}^2}$ is the ideal sheaf of the zero-dimensional variety $S$.

\section{Construction}\label{secc}
Let $C$ be a general curve of genus $g=9$. The Brill-Noether number $\rho(9,8,2)=0$ by $(*)$ hence $C$ has finitely many $g^2_8$'s, ie. finitely many linear system of divisors of degree $8$ and dimension $2$. We pick one of these linear systems and consider the corresponding plane model $\Gamma \subset \PP^2$. Then $\Gamma$ will be a curve of degree $d=8$ with
$$ \delta={d-1 \choose 2} -g= 21-9 =12$$
nodes by the degree-genus formula. To see that all nodes are indeed ordinary nodes, it suffices to exhibit an example, which nowadays is easily verified by computer algebra: For a  general set $P=\{p_1,\ldots,p_{12} \} \subset \PP^2$ of 12 points we expect for the ideal sheaf $\sI_P$ of the zero-dimensional schem $P$ that
$$ h^0(\PP^2, \mathcal I^2_P(8))=9= {10 \choose 2}-3\cdot12$$
holds, since imposing a node are 3 linear conditions on the coefficients of the equation. An example computation shows that this is indeed the case  and that furthermore, a general form in $H^0(\PP^2, \mathcal I_P^2(8))$ defines a curve $\Gamma$ with 12 ordinary nodes as its sole singularities.
Moreover, by  \cite[Theorem 1.1]{EU}, $\Gamma$ is irreducible, since the resolution of twelve general points has the shape
$$0 \leftarrow \sO_P \leftarrow \sO \leftarrow \sO(-4)^3 \leftarrow \sO(-6)^2\leftarrow 0.$$
In particular, the syzygy module of the homogeneous ideal $I_P$ has no generators  in degree $8$. \medskip

A key point in our construction is the observation that we can find 12-nodal octics which pass through 8 further points $q_1,\ldots,q_8$, since  $H^0(\PP^2, \mathcal I^2_P(8))$ is 9-dimensional. \medskip

Blowing-up the double points yields a diagram
$$
\xymatrix{
C=\tilde{\Gamma} \ar[r] \ar[d]& S=Bl_{p_1,\ldots,p_{12}} \mathbb{P}^2 \ar[d]^v\\
\Gamma  \ar[r]&        \mathbb{P}^2
}
$$
and $C$  is isomorphic to the proper transform  $\tilde{\Gamma}$ of  $\Gamma$ in $S$. Denote by
\begin{itemize}
\item
$E_i$ = class of the exceptional divisor over the point $p_i$,
\item
$l = v^{\ast} h$, where $h$ is the class of a line in $\mathbb{P}^2$, and
\item
$E=\sum^{12}_{j=1} E_i$.
\end{itemize}
Then
\begin{itemize}
\item
$C \sim 8l-2E$,
\item
$\omega_S \sim -3l+E$ is the canonical divisor on $S$, and
\item
$\omega_C \sim (5l-E)|_C$
\end{itemize}
by the adjunction formula.  From $H^1(\omega_S)=0$ and  the exact sequence
$$
0\longrightarrow \mathcal{O}_S(-3l+E)\longrightarrow \mathcal{O}_S(5l-E)\longrightarrow \omega_C\longrightarrow 0
$$
we conclude  that $|\omega_C|$ is cut out on $C \subset S$ by the strict transforms of quintics in $\mathbb{P}^2$ through $p_1,\ldots,p_{12}$.
We work with $\Gamma$ as a model of $C$ which allows us to understand the possible $g^1_8$'s on $C$.

Let $|D|=g^1_8$ be  a complete base point free pencil of divisors of degree $8$ on $C$ corresponding to a general point
$L=\sO(D) \in W^1_8(C)$.  By Riemann-Roch
$|\omega_C(-D)|$ is another pencil of degree $8$, which is again base point free, since $L$ is general in $W^1_8(C)$.
A general divisor $q_1+\ldots+q_8 \in |\omega_C(-D)|$ has $8$ distinct points as its support, which are disjoint  from the nodes of $\Gamma$. Since
$D+q_1+\ldots+q_8 \in |\omega_C|$, %By geometric version of the Riemann-Roch theorem \cite{ACGH},
we conclude  that the pencil $|D|=|\omega_C(-\sum^{8}_{i=1}q_i)|$ is cut out on $\Gamma$  by a pencil $|F|$ of quintics through $p_1,\ldots,p_{12},q_1,\ldots,q_8$.

Thus for $P=\{p_1,\ldots,p_{12}\}$ and $Q=\{q_1,\ldots,q_8\}$ we have
$$
h^0(\mathbb{P}^2, \mathcal{I}_{P\cup Q}(5))=2>1=h^0(\mathbb{P}^2,\mathcal{O}_{\mathbb{P}^2}(5))-20,
$$
i.e., the points $p_1,\ldots,p_{12},q_1,\ldots,q_8$ fail to impose independent conditions on quintics.

By Bezout's Theorem two quintics intersect in $25$ points counted with multiplicity, unless they have a common component.
So we expect that the pencil $|F|$ has $5$ further base points $r_1,\ldots,r_5 \subset \PP^2$ away from $\Gamma$. \medskip

The basic idea of the construction is to reverse the order in which we choose the data:

\begin{enumerate}\label{construction}
\item Choose random collections $P=\{p_1,\ldots,p_{12}\}$ and $R=\{r_1,\ldots,r_5\}$ of $12$ and $5$ points in $\PP^2$ with $P \cap R =\emptyset$.
\item Randomly choose a pencil $\langle f_1,f_2\rangle \in \GG(2, H^0(\PP^2,\sI_{P \cup R}(5)))$ of quintic forms through $P\cup R$.
\item Define $Q=\{q_1,\ldots,q_8\} \subset \PP^2$ as the residual intersection given by the homogeneous ideal $I_Q=(f_1, f_2): I_{P\cup R}$.
\item Choose a section in $H^0(\PP^2,\sI_P^2\cap \sI_Q(8))$ and take $\Gamma$ to be its zero loci.
\end{enumerate}
Then for this specific example and hence for general choices, we verify that the normalization $C$ of $\Gamma$ is a smooth curve of genus $g=9$, and that $C \to \PP^1, x \mapsto (f_1(x):f_2(x))$
defines a degree  $8$ simply branched cover of $\PP^1$. \medskip

So on one hand we have the open subvariety
$$
X_1=\{ (P,R)  \in \Hilb_{12}(\PP^2) \times \Hilb_5(\PP^2) \mid P, R \hbox{ reduced with } h^0(\sI_{P\cup R}(5))=4 \}
$$
of the Hilbert scheme of $12+5$ points in $\PP^2$, since the condition $h^0(\sI_{P\cup R}(5))=4$ implies that $P$ and $R$ are disjoint,
and  the $\GG(2,4)$-bundle over $X_1$ of triples
$$X_2=\{ (P,R,|F|) \mid (P,R) \in X_1,\ |F| \in \GG(2,H^0(\PP^2,\sI_{P\cup R}(5))) \}.$$
On the other hand we have
$$Y_1= \sU_{9,8}\times_{\sM_9} \sW_{9,8}^1 $$
and the $\PP^1$-bundle of triples
$$Y_2 = \{(\Gamma,L,Q) \mid \Gamma \in \sU_{9,8}, L \in W^1_8(C), Q=q_1+\ldots +q_8 \in |\omega_C \otimes L^{-1}| \},
$$
where $C$ denotes the normalization of $\Gamma$.

\begin{theorem} The construction above defines a birational map
$X_2 \dasharrow Y_2$.
\end{theorem}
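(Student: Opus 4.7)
The plan is to construct an explicit rational inverse $\psi \colon Y_2 \dashrightarrow X_2$ to the construction map $\phi \colon X_2 \dashrightarrow Y_2$ and to verify that $\psi \circ \phi = \mathrm{id}$ and $\phi \circ \psi = \mathrm{id}$ on dense open subsets. The Macaulay2 example of Section 3 will serve as a single point at which all required open conditions hold simultaneously, and semi-continuity then extends these conditions to dense open subsets of the irreducible ambient schemes $X_2$ and $Y_2$.

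I would define $\psi$ as follows. Given $(\Gamma, L, Q) \in Y_2$, take $P$ to be the singular locus of $\Gamma$, consisting of its $12$ nodes. Since $\omega_C \sim (5l-E)|_C$, the restriction map
$$
H^0(\PP^2, \sI_P(5)) \longrightarrow H^0(C, \omega_C)
$$
has kernel $H^0(S, \mathcal{O}_S(-3l+E)) = 0$ and is hence an isomorphism of $9$-dimensional vector spaces. Imposing further vanishing on the eight points of $Q$ identifies
$$
H^0(\PP^2, \sI_{P \cup Q}(5)) \cong H^0(C, \omega_C(-Q)) = H^0(C, L),
$$
a $2$-dimensional space, whose projectivization I take as the pencil $|F|$. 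By Bezout, two generic members of $|F|$ meet in $25 = 12 + 8 + 5$ points, and on an open locus the residual $5$ points $R$ are reduced and lie off $\Gamma \cup P \cup Q$. This yields a triple $(P, R, |F|) \in X_2$.

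For the composition $\psi \circ \phi$, start from $(P, R, |F|) \in X_2$: the octic $\Gamma$ produced by step (4) has double points along $P$, and on the open locus where $\Gamma$ has no further singularities, the nodal locus of $\Gamma$ is precisely $P$. The pencil recovered from $(\Gamma, L, Q)$ is the $2$-dimensional space $H^0(\PP^2, \sI_{P \cup Q}(5))$, which contains $|F|$ and equals it by dimension; the residual base points of $|F|$ away from $P \cup Q$ recover $R$. For $\phi \circ \psi$, starting from $(\Gamma, L, Q) \in Y_2$, the residual intersection of two members of $|F|$ away from $P \cup R$ recovers $Q$, and since $\Gamma$ lies in $H^0(\PP^2, \sI_P^2 \cap \sI_Q(8))$ which is $1$-dimensional on an open locus, the curve $\Gamma$ is recovered up to scalar.

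The main obstacle is certifying the many open conditions required for both maps to be well-defined and mutually inverse: the dimension counts $h^0(\sI_{P \cup R}(5)) = 4$, $h^0(\sI_{P \cup Q}(5)) = 2$, and $h^0(\sI_P^2 \cap \sI_Q(8)) = 1$ must be exact rather than mere inequalities; the sets $Q$ and $R$ must be reduced and mutually disjoint from $P$ and from $\Gamma$; and $\Gamma$ must be irreducible with exactly $12$ ordinary nodes. Each of these is an open condition, and the explicit computation over $\mathbb{F}_{10007}$ exhibits a single point at which they all hold at once; semi-continuity then promotes this to density on the irreducible schemes $X_2$ and $Y_2$, so $\phi$ and $\psi$ are defined and mutually inverse on dense open subsets.
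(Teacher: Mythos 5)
Your proposal is correct and follows essentially the same route as the paper: define the inverse by taking $P$ to be the nodes of $\Gamma$ and $R$ to be the residual base points of the pencil of quintics through $P\cup Q$, and certify all the required open conditions (exact values of $h^0(\sI_{P\cup R}(5))$, $h^0(\sI_{P\cup Q}(5))$, $h^0(\sI_P^2\cap\sI_Q(8))$, reducedness and disjointness of $Q$ and $R$, twelve ordinary nodes) at the single Macaulay2 example and spread them out by semi-continuity. The only cosmetic difference is that you derive $h^0(\sI_{P\cup Q}(5))=2$ conceptually from the identification with $H^0(C,\omega_C(-Q))=H^0(C,L)$ on the blow-up, where the paper simply verifies it computationally in the example.
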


\begin{proof} Since
\begin{align*}
\dim X_2 &= \dim \Hilb_{12+5}(\PP^2)+ \dim \GG(2,4) \cr
&= 34+4=38
\end{align*}
and
\begin{align*}
\dim Y_2  &= \dim \sU_{9,8}+\dim W^1_8(C) + \dim |\omega_C\otimes L^{-1}| \\
&= 32+5+1=38
\end{align*}
these two spaces have the same dimension. However to prove the theorem we have to check more:
First of all, since two general quintics intersect in 25 distinct points, it is clear that in step (3) of the construction we get for general choices
of $(P,R,|F|) \in X_2$ a collection $Q$ of eight distinct points disjoint from $P \cup R$.
What is not so clear but expected, is that for general choices
$h^0(\PP^2,\sI^2_P\cap \sI_Q(8))=1$, so that step (4) involves no further choices. This is easy to verify in our randomly chosen specific example computationally, and by semi-continuity we deduce that it holds as well  for general choices.
We  next check in our specific example that the curve $\Gamma$ has only 12 ordinary nodes and that $\Gamma$ and $R$ are disjoint. By semi-continuity  the same holds for general choices. So the pencil defines a degree $8$ map $C \to \PP^1$. We can check that it is simply ramified in our specific example, which by semi-continuity establishes this for general choices. This proves that we have 
a rational map
$$X_2 \dasharrow Y_2.$$
To prove that it is birational, we have to recover $P$ and $R$ from a triple $(\Gamma,L,Q) \in Y_2$. The set $P$ is the set of nodes.
To recover $R$ we check that in our specific randomly constructed example
$H^0(\PP^2,\sI_{P\cup Q}(5))=\langle f_1, f_2 \rangle$ is indeed a pencil of quintics without fix component, ie. $f_1,f_2$ have no common factor, and $25$ distinct base points,
so that we recover the homogenous ideal of $R$ as $I_R =(f_1, f_2):I_{P\cup Q}$. Since the condition $h^0(\PP^2,\sI_{P\cup Q}(5))=2$,  and the condition that this pencil has no fix component and $25$ distinct base points  are open conditions on triples $(\Gamma,L,Q) \in Y_2$, we get a rational inverse $Y_2 \dasharrow X_2$.
\end{proof}

\begin{corollary} $Y_2, Y_1=\sU_{9,8}\times_{\sM_9} \sW_{9,8}^1,  \sW_{9,8}^1$ and the Hurwitz scheme $\sH_{9,8}$ are unirational.
\end{corollary}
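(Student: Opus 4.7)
The plan is a chain of unirationality inferences starting from the theorem. First I would check that $X_2$ is unirational. The base $X_1$ is an open subvariety of $\Hilb_{12}(\PP^2)\times\Hilb_5(\PP^2)$, which is unirational via the dominant rational map from $(\PP^2)^{12}\times(\PP^2)^5$; moreover $X_2\to X_1$ is a $\GG(2,4)$-bundle built from the rank-$4$ vector bundle with fiber $H^0(\PP^2,\sI_{P\cup R}(5))$ over $(P,R)$, so $X_2$ is unirational as well. The theorem then shows that $Y_2$ is birational to $X_2$ and hence unirational.

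Next I would descend the forgetful tower. The map $Y_2\to Y_1$ is a $\PP^1$-bundle with fiber $|\omega_C\otimes L^{-1}|$, hence dominant, so $Y_1$ is unirational. For $Y_1\to\sW^1_{9,8}$ to be dominant I note that $\sW^1_{9,8}\to\sM_9$ is dominant because $d=8\geq(g+2)/2$, so a general $(C,L)\in\sW^1_{9,8}$ has $C$ general in $\sM_9$; then $\rho(9,8,2)=0$ guarantees that $C$ carries a $g^2_8$ and therefore admits a plane model in $\sU_{9,8}$. This makes $\sW^1_{9,8}$ unirational.

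For $\sH_{9,8}$ I would use the chain $\sH_{9,8}\to\mathcal{G}^1_{9,8}\to\sW^1_{9,8}$ from the preliminaries. The second arrow is birational: for $L\in W^1_8(C)\setminus W^2_8(C)$ one has $h^0(C,L)=2$, so the fiber $\GG(2,H^0(C,L))$ is a point. Hence $\mathcal{G}^1_{9,8}$ is unirational. For the last arrow, rather than struggle with a possibly non-trivial $\PGL(2)$-bundle, I would build a dominant rational map directly by enlarging $X_2$ to the variety $X_2'$ of tuples $(P,R,f_1,f_2)$ equipped with an ordered basis of the pencil; this $X_2'$ is open in a rank-$8$ vector bundle over $X_1$ and therefore unirational, and the construction assigns to $(P,R,f_1,f_2)$ the explicit cover $C\to\PP^1$, $x\mapsto(f_1(x):f_2(x))$, giving a rational map $X_2'\dashrightarrow\sH_{9,8}$.

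The main technical point I anticipate is verifying dominance of this last map. Given the birationality $X_2\dashrightarrow Y_2$, this reduces to checking that from a general cover one recovers $(P,R,f_1,f_2)$ up to the expected ambiguities---the $\PGL(3)$ action on plane models, the $\PP^1$ of choices of $Q\in|\omega_C\otimes L^{-1}|$, and the common $\mathbb{G}_m$-scaling of $(f_1,f_2)$---whose total dimension $8+1+1=10$ matches $\dim X_2'-\dim\sH_{9,8}=42-32$.
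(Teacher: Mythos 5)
Your proposal is correct, and its skeleton coincides with the paper's: unirationality of $X_2$, transfer to $Y_2$ by the birationality theorem, and descent along $Y_2\to Y_1\to \sW^1_{9,8}$. You are in fact more careful than the paper at two points: the paper literally asserts that the $17$-fold product of $\PP^2$ dominates $X_2$, which cannot be taken at face value ($\dim(\PP^2)^{17}=34<38=\dim X_2$) and must be read as you phrase it, namely $(\PP^2)^{17}$ dominates $X_1$ and $X_2\to X_1$ is a $\GG(2,4)$-bundle; and you supply the omitted verification that $Y_1\to\sW^1_{9,8}$ is dominant via $\rho(9,8,2)=0$.

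The genuine divergence is the last step. The paper handles $\sH_{9,8}$ in one sentence, as a $\PGL(2)$-bundle over $\sW^1_{9,8}$ (more precisely over $\mathcal{G}^1_{9,8}$, which is birational to $\sW^1_{9,8}$ since $h^0(C,L)=2$ generically, as you observe). This implicitly uses that the relevant $\PGL(2)$-torsor is generically trivial --- true here, because the fiber over $(C,L,V)$ is the space of isomorphisms $\PP(V^\vee)\cong\PP^1$ for an honest rank-$2$ bundle $V$, hence rational over the function field --- but the paper does not comment on this. Your alternative, mapping the ordered-basis space $X_2'$ (open in a rank-$8$ vector bundle over $X_1$, hence unirational) directly onto $\sH_{9,8}$, sidesteps any question of bundle triviality at the cost of verifying dominance by hand; your accounting of the generic fiber ($\PGL(3)$ acting on the plane model, the $\PP^1$ of choices of $Q$, the common scaling of $(f_1,f_2)$, total $8+1+1=10=42-32$) is consistent, and the actual dominance follows, as you say, from the birationality $X_2\dasharrow Y_2$ together with the fact that a suitable ordered basis of the pencil of quintics realizes any prescribed identification of the pencil $|L|$ with $\PP^1$. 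Both routes are valid: the paper's is shorter, yours is more self-contained and makes the descent to the Hurwitz space fully explicit.
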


\begin{proof}
$X_2$ is unirational, since the rational map from the 17-fold product of $\PP^2$ to $X_2$
 is dominant.  $\sH_{9,8}$ as a $\PGL(2)$-bundle over $ \sW_{9,8}^1$ is unirational as well.
\end{proof}

\begin{remark} From the existence of an example with all desired properties over $\mathbb Q$ we can deduce the existence of example an with desired properties
over finite prime fields $\mathbb F_p$ for all but finitely many primes. Thus $\sH_{9,8}$ is also unirational over $\mathbb F_p$ for all but finitely many primes. Computing an explicit example over $\mathbb Q$ with integer coefficients allows to determine the possible exceptional primes, which then in principal could be checked case by case. We plan to incorporate this into a future update of our $\mathtt{Macaulay2}$ package \cite{DS}.
\end{remark}

\bigskip

\section*{Acknowledgement}
This work is done during the first author's visit at Universit\"at des Saarlandes. He thanks his colleagues there for their
warm hospitality. He also thanks Farhad Rahmati for his continuous support and valuable discussions. We thank the referee for the careful reading of the manuscript.

\begin{bibdiv}
\begin{biblist}

\bib{AC}{article}{
   author={Arbarello, Enrico},
   author={Cornalba, Maurizio},
   title={Footnotes to a paper of Beniamino Segre: ``On the modules of
   polygonal curves and on a complement to the Riemann existence theorem''
   (Italian) [Math. Ann. {\bf 100} (1928), 537--551;\ Jbuch {\bf 54}, 685]},
   note={The number of $g^{1}_{d}$'s on a general $d$-gonal curve, and
   the unirationality of the Hurwitz spaces of $4$-gonal and $5$-gonal
   curves},
   journal={Math. Ann.},
   volume={256},
   date={1981},
   number={3},
   pages={341--362},
   issn={0025-5831},
%   review={\MR{626954}},
 %  doi={10.1007/BF01679702},
}

\bib{ACGH}{book}{
  label={ACGH},
   author={Arbarello, E.},
   author={Cornalba, M.},
   author={Griffiths, P. A.},
   author={Harris, J.},
   title={Geometry of algebraic curves. Vol. I},
   series={Grundlehren der Mathematischen Wissenschaften [Fundamental
   Principles of Mathematical Sciences]},
   volume={267},
   publisher={Springer-Verlag, New York},
   date={1985},
   pages={xvi+386},
   isbn={0-387-90997-4},
%   review={\MR{770932}},
 %  doi={10.1007/978-1-4757-5323-3},
}

\bib{ACG}{book}{
label={ACG},
   author={Arbarello, Enrico},
   author={Cornalba, Maurizio},
   author={Griffiths, Pillip A.},
   title={Geometry of algebraic curves. Volume II},
   series={Grundlehren der Mathematischen Wissenschaften [Fundamental
   Principles of Mathematical Sciences]},
   volume={268},
   note={With a contribution by Joseph Daniel Harris},
   publisher={Springer, Heidelberg},
   date={2011},
   pages={xxx+963},
   isbn={978-3-540-42688-2},
%   review={\MR{2807457}},
 %  doi={10.1007/978-3-540-69392-5},
}

\bib{Cle}{article}{
   author={Clebsch, Alfred},  
   title={Zur Theorie Reimannscher Fl\"achen},
  journal={Math. Ann.},
   volume={6},
   date={1872},
   pages={216--230},
}

\bib{DS}{misc}{
    label={DS},
    author={Damadi, Hamid.},
    author={Schreyer, Frank-Olaf},
    title = {RandomCurveOfGenus9WithPencilOfDegree8,\- {\rm a software package for Macaulay2}},
    note = {Available at  \\ \url{http://www.math.uni-sb.de/ag/schreyer/index.php/computeralgebra}},
}

\bib{EH}{article}{
   author={Eisenbud, David},
   author={Harris, Joe},
   title={The Kodaira dimension of the moduli space of curves of genus $\geq
   23$},
   journal={Invent. Math.},
   volume={90},
   date={1987},
   number={2},
   pages={359--387},
   issn={0020-9910},
%   review={\MR{910206}},
 %  doi={10.1007/BF01388710},
}

\bib{EU}{article}{
   author={Eisenbud, David},
   author={Ulrich, Bernd},
   title={The regularity of the conductor},
   conference={
      title={A celebration of algebraic geometry},
   },
   book={
      series={Clay Math. Proc.},
      volume={18},
      publisher={Amer. Math. Soc., Providence, RI},
   },
   date={2013},
   pages={267--280},
%   review={\MR{3114944}},
}

\bib{Far}{article}{
   author={Farkas, Gavril},
   title={Birational aspects of the geometry of $\overline{\sM}_g$ },
   conference={
      title={Surveys in differential geometry. Vol. XIV. Geometry of Riemann
      surfaces and their moduli spaces},
   },
   book={
      series={Surv. Differ. Geom.},
      volume={14},
      publisher={Int. Press, Somerville, MA},
   },
   date={2009},
   pages={57--110},
%   review={\MR{2655323}},
 %  doi={10.4310/SDG.2009.v14.n1.a3},
}
		
\bib{F}{article}{
   author={Fulton, William},
   title={Hurwitz schemes and irreducibility of moduli of algebraic curves},
   journal={Ann. of Math. (2)},
   volume={90},
   date={1969},
   pages={542--575},
   issn={0003-486X},
 %  review={\MR{0260752}},
}

\bib{Ge}{article}{
   author={Geiss, Florian},
   title={The unirationality of Hurwitz spaces of 6-gonal curves of small
   genus},
   journal={Doc. Math.},
   volume={17},
   date={2012},
   pages={627--640},
   issn={1431-0635},
%   review={\MR{3007673}},
}

\bib{Ge12}{misc}{
   author={Geiss, Florian},
   title={The unirationality of Hurwitz spaces of hexagonal curves  small
   genus},
   note={Dissertation, Universit\"at des Saarlandes},
   date={2013},
   }

\bib{GS}{misc}{
    label={M2},
    author={Grayson, Daniel~R.},
    author={Stillman, Michael~E.},
    title = {Macaulay2, a software system for research
	    in algebraic geometry},
    note = {Available at \url{http://www.math.uiuc.edu/Macaulay2/}},
}

\bib{H84}{article}{
   author={Harris, J.},
   title={On the Kodaira dimension of the moduli space of curves. II. The
   even-genus case},
   journal={Invent. Math.},
   volume={75},
   date={1984},
   number={3},
   pages={437--466},
   issn={0020-9910},
%   review={\MR{735335}},
 %  doi={10.1007/BF01388638},
}

\bib{H}{article}{
   author={Harris, Joe},
   title={On the Severi problem},
   journal={Invent. Math.},
   volume={84},
   date={1986},
   number={3},
   pages={445--461},
   issn={0020-9910},
%   review={\MR{837522}},
 %  doi={10.1007/BF01388741},
}

\bib{HM}{book}{
   author={Harris, Joe},
   author={Morrison, Ian},
   title={Moduli of curves},
   series={Graduate Texts in Mathematics},
   volume={187},
   publisher={Springer-Verlag, New York},
   date={1998},
   pages={xiv+366},
   isbn={0-387-98438-0},
   isbn={0-387-98429-1},
%   review={\MR{1631825}},
}

\bib{HM82}{article}{
   author={Harris, Joe},
   author={Mumford, David},
   title={On the Kodaira dimension of the moduli space of curves},
   note={With an appendix by William Fulton},
   journal={Invent. Math.},
   volume={67},
   date={1982},
   number={1},
   pages={23--88},
   issn={0020-9910},
%   review={\MR{664324}},
 %  doi={10.1007/BF01393371},
}

\bib{M1}{article}{
   author={Mukai, Shigeru},
   title={Curves and symmetric spaces. I},
   journal={Amer. J. Math.},
   volume={117},
   date={1995},
   number={6},
   pages={1627--1644},
   issn={0002-9327},
 %  review={\MR{1363081}},
  % doi={10.2307/2375032},
}

\bib{M2}{article}{
   author={Mukai, Shigeru},
   title={Curves and symmetric spaces, II},
   journal={Ann. of Math. (2)},
   volume={172},
   date={2010},
   number={3},
   pages={1539--1558},
   issn={0003-486X},
  % review={\MR{2726093}},
 %  doi={10.4007/annals.2010.172.1539},
}	

\bib{P}{article}{
   author={Petri, Karl},  
   title={\"Uber die invariante Darstellung algebraischer Funktionen einer Variablen},
  journal={Math. Ann.},
   volume={88},
   date={1923},
   pages={243--289},
}

\bib{S}{article}{
   author={Schreyer, Frank-Olaf},
   title={Computer aided unirationality proofs of moduli spaces},
   conference={
      title={Handbook of moduli. Vol. III},
   },
   book={
      series={Adv. Lect. Math. (ALM)},
      volume={26},
      publisher={Int. Press, Somerville, MA},
   },
   date={2013},
   pages={257--280},
   %review={\MR{3135439}},
}

\bib{ST}{article}{
   author={Schreyer, Frank-Olaf},
   author={Tonoli, Fabio},
   title={Needles in a haystack: special varieties via small fields},
   conference={
      title={Computations in algebraic geometry with Macaulay 2},
   },
   book={
      series={Algorithms Comput. Math.},
      volume={8},
      publisher={Springer, Berlin},
   },
   date={2002},
   pages={251--279},
   %review={\MR{1949554}},
}

\end{biblist}
\end{bibdiv}

\end{document}